\newcommand\NN{{\mathbb N}}
\newcommand\ZZ{{\mathbb Z}}
\newcommand\w{{\omega}}
\newcommand\vt{{\Updelta}}
\newcommand\II{{\mathcal I}}
\newcommand\PP{{\mathcal P}}
\newcommand\FF{{\mathcal F}}
\newtheorem{Qs}{Question}
\newtheorem{Ps}{Proposition}
\begin{document}

\title{Ultracompanions of subsets of a group}
\author{I.~Protasov, S.~Slobodianiuk}
\date{}
\maketitle

\begin{abstract}
Let $G$ be a group, $\beta G$ is the Stone-$\check{C}$ech compactification of $\beta G$ endowed with the structure of a right topological semigroup, $G^*=\beta G\setminus G$. Given any subset $A$ of $G$ and $p\in G^*$, we define the $p$-companion $\vt_p(A)=A^*\cap Gp$ of $A$, and characterize the subsets with finite and discrete ultracompanions.

\

{\bf 2010 AMS Classification}: 54D35, 22A15, 20F69.

\

{\bf Keywords}: Stone-$\check{C}$ech compactification, ultracompanion, sparse and discrete subsets of a group.
\end{abstract}

\section{Introduction}

Given a discrete space $X$, we take the points of $\beta X$, the Stone-$\check{C}$ech compactification of $X$, to be the ultrafilters on $X$, with the points of $X$ identified with the principal ultrafilters, so $X^*=\beta X\setminus  X$ is the set of all free ultrafilters on $X$. The topology on $\beta X$ can be defined by stating that the sets of the form $\overline{A}=\{p\in\beta X: A\in p\}$, where $A$ is a subset of $X$, are base for the open sets. We note the sets of this form are clopen and that for any $p\in\beta X$ and $A\subseteq X$, $A\in p$ if and only if $p\in\overline{A}$. For any $A\subseteq X$, we denote $A^*=\overline{A}\cap G^*$. The universal property of $\beta X$ states that every mapping $f: X\to Y$, where $Y$ is a compact Hausdorff space, can be extended to the continuous mapping $f^\beta:\beta X\to Y$.

Now let $G$ be a discrete group. Using the universal property of $\beta G$, we can extend the group multiplication from $G$ to $\beta G$ in two steps. Given $g\in G$, the mapping $$x\mapsto gx: \text{  } G\to \beta G$$
extends to the continuous mapping $$q\mapsto gq: \text{ } \beta G\to \beta G.$$
Then, for each $q\in\beta G$, we extend the mapping $g\mapsto gq$ defined from $G$ into $\beta G$ to the continuous mapping $$p\mapsto pq:\text{ }\beta G\to\beta G.$$
The product $pq$ of the ultrafilters $p$, $q$ can also be defined by the rule: given a subset $A\subseteq G$, $$A\in pq\leftrightarrow\{g\in G:g^{-1}A\in q\}\in p.$$
To describe a base for $pq$, we take any element $P\in p$ and, for every $x\in P$, choose some element $Q_x\in q$. Then $\bigcup_{x\in P}xQ_x\in pq$, and the family of subsets of this form is a base for the ultrafilter $pq$.

By the construction, the binary operation $(p,q)\mapsto pq$ is associative, so $\beta G$ is a semigroup, and $G^*$ is a subsemigroup of $\beta G$. For each $q\in \beta G$, the right shift $x\mapsto xq$ is continuous, and the left shift $x\to gx$ is continuous for each $g\in G$.

For the structure of a compact right topological semigroup $\beta G$ and plenty of its applications to combinatorics, topological algebra and functional analysis see ~\cite{b2}, ~\cite{b4}, ~\cite{b5}, ~\cite{b18}, ~\cite{b21}.

Given a subset $A$ of a group $G$ and an ultrafilter $p\in G^*$ we define a {\em $p$-companion} of $A$ by
$$\vt_p(A)=A^*\cap Gp=\{gp: g\in G, A\in gp\},$$
and say that a subset $S$ of $G^*$ is an {\em ultracompanion} of $A$ if $S=\vt_p(A)$ for some $p\in G^*$.

Clearly,$A$ is finite if and only if $\vt_p(A)=\varnothing$ for every $p\in G^*$, and $\vt_p(G)=Gp$ for each $p\in G^*$.

We say that a subset$A$ of a group $G$ is
\\$\bullet$ {\em sparse} if each ultracompanion of $A$ is finite;
\\$\bullet$ {\em disparse} if each ultracompanion of $A$ is discrete.

In fact, the sparse subsets were introduced in ~\cite{b3} with rather technical definition (see Proposition 5) in order to characterize strongly prime ultrafilters in $G^*$, the ultrafilters from $G^*\setminus\overline{G^*G^*}$.

In this paper we study the families of sparse and disparse subsets of a group, and characterize in terms of ultracompanions the subsets from the following  basic classification.

A subset $A$ of $G$ is called
\\$\bullet$ {\em large} if $G=FA$ for some finite subset $F$ of $G$;
\\$\bullet$ {\em thick} if, for every finite subset $F$ of $G$, there exists $a\in A$ such that $Fa\subseteq A$;
\\$\bullet$ {\em prethick} if $FA$ is thick for some finite subset $F$ of $G$;
\\$\bullet$ {\em small} if $L\setminus A$ is large for every large subset $L$;
\\$\bullet$ {\em thin} if $gA\cap A$ is finite for each $g\in G\setminus\{e\}$, $e$ is the identity of $G$.

In the dynamical terminology ~\cite{b5}, the large and prethick subsets are called syndetic and piecewise syndetic respectively. For references on the subset combinatorics of groups see the survey ~\cite{b12}.

We conclude the paper with discussions of some modifications of sparse subsets and a couple of open questions.

\section{Characterizations}~\label{s2}
\begin{Ps}~\label{p1} For a subset $A$ of a group $G$ and an ultrafilter $p\in G^*$, the following statements hold

$(i)$ $\vt_p(FA)=F\vt_p(A)$ for every finite subset $F$ of $G$;

$(ii)$ $\vt_p(Ah)=\vt_{ph^{-1}}(A)$ for every $h\in G$;

$(iii)$ $\vt_p(A\cup B)=\vt_p(A)\cup\vt_p(B)$.\end{Ps}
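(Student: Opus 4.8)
The plan is to derive all three identities from a few elementary facts about the operator $A\mapsto A^*$ together with the two translation rules for ultrafilter membership, $A\in gq\Leftrightarrow g^{-1}A\in q$ and $C\in qh^{-1}\Leftrightarrow Ch\in q$, each of which is the product formula $A\in pq\Leftrightarrow\{g:g^{-1}A\in q\}\in p$ specialized to a principal ultrafilter. From these I would first record the star identities that carry the load: $(gA)^*=gA^*$ and $(Ah)^*=A^*h$, which hold because the left shift $x\mapsto gx$ and the right shift $x\mapsto xh$ are self-homeomorphisms of $\beta G$ preserving $G^*$ and sending the clopen set $\overline{A}$ to $\overline{gA}$, respectively $\overline{Ah}$; and $(A\cup B)^*=A^*\cup B^*$ together with its finite-union version $(FA)^*=\bigcup_{f\in F}(fA)^*$, which hold because an ultrafilter contains a finite union iff it contains one of the members. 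Statement $(iii)$ is then immediate: intersect $(A\cup B)^*=A^*\cup B^*$ with $Gp$ and distribute the intersection over the union.

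For $(i)$ I would expand the left-hand side as $\vt_p(FA)=(FA)^*\cap Gp=\bigl(\bigcup_{f\in F}fA^*\bigr)\cap Gp$ using the star identities, and the right-hand side as $F\vt_p(A)=\bigcup_{f\in F}f\bigl(A^*\cap Gp\bigr)=\bigcup_{f\in F}\bigl(fA^*\cap fGp\bigr)$. The one nontrivial simplification is $fGp=Gp$, valid because $f\in G$, so left multiplication by $f$ permutes $G$ and fixes the orbit $Gp$ setwise; both sides then collapse to $\bigl(\bigcup_{f\in F}fA^*\bigr)\cap Gp$, giving the equality.

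For $(ii)$ the engine is the equivalence $Ah\in gp\Leftrightarrow A\in g(ph^{-1})$, obtained from the chain $Ah\in gp\Leftrightarrow(g^{-1}A)h\in p\Leftrightarrow g^{-1}A\in ph^{-1}\Leftrightarrow A\in g(ph^{-1})$ by the two translation rules. Combining $(Ah)^*=A^*h$ with the associativity identity $G(ph^{-1})=(Gp)h^{-1}$, I would write $\vt_p(Ah)=A^*h\cap Gp$ and $\vt_{ph^{-1}}(A)=A^*\cap(Gp)h^{-1}$, and then match these through the right-shift homeomorphism $x\mapsto xh$, which sends $A^*$ to $A^*h=(Ah)^*$ and $(Gp)h^{-1}$ to $Gp$. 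This final identification is precisely where care is needed and is the main obstacle: the two companions are indexed by the same set $\{g\in G:A\in g(ph^{-1})\}$ but sit a priori in the distinct orbits $Gp$ and $G(ph^{-1})=(Gp)h^{-1}$, so unlike in $(i)$ one cannot invoke left-translation invariance of the orbit and must instead track the right shift by $h$ explicitly to reconcile them.
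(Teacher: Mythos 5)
The paper gives no proof of this proposition at all (it is stated as evident and the text proceeds directly to Proposition 2), so your proposal must be judged on its own merits. Parts $(i)$ and $(iii)$ are correct and complete: the star identities $(gA)^*=gA^*$, $(Ah)^*=A^*h$, $(A\cup B)^*=A^*\cup B^*$ are exactly the right tools, and in $(i)$ the simplification $fGp=Gp$, together with injectivity of the left shift (so that $f(X\cap Y)=fX\cap fY$), closes the argument.

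Part $(ii)$ is where the genuine problem lies, and you located it precisely but did not resolve it --- because, as literally stated, it cannot be resolved. Your computation shows that the right shift $x\mapsto xh$ carries $\vt_{ph^{-1}}(A)=A^*\cap(Gp)h^{-1}$ bijectively (indeed homeomorphically) onto $\vt_p(Ah)=(Ah)^*\cap Gp$; that proves the identity $\vt_p(Ah)=\vt_{ph^{-1}}(A)\,h$, not the set equality $\vt_p(Ah)=\vt_{ph^{-1}}(A)$. The two companions lie in the orbits $Gp$ and $G(ph^{-1})$, which are either equal or disjoint, and in a nonabelian group they are in general disjoint. Concretely, take $A=G$: statement $(ii)$ then asserts $Gp=G(ph^{-1})$, i.e. $ph^{-1}\in Gp$. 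This fails in the free group on $a,b$: if $\{a^n:n\ge1\}\in p$, then $\{a^nb:n\ge1\}\in pb$ and $\{ga^n:n\ge1\}\in gp$ for any $g\in G$, but $ga^n=a^mb$ forces $g=a^mba^{-n}$, so these two sets meet in at most one point; hence $pb\neq gp$ for every $g\in G$, and the orbits $Gpb$ and $Gp$ are disjoint. (In an abelian group $ph^{-1}=h^{-1}p$, so no discrepancy arises there, which may explain the slip in the paper.) So the printed statement contains an inaccuracy --- harmless for its uses in Propositions 7 and 10, where only the cardinality and the topology of ultracompanions matter and both are preserved by the right-shift homeomorphism --- and the correct formulation is $\vt_p(Ah)=\vt_{ph^{-1}}(A)\,h$, which is exactly what your argument establishes. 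You should either state and prove that corrected identity, or note explicitly that equality holds only after applying the right shift; as written, your final sentence promises a reconciliation that does not exist.
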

\begin{Ps} For an infinite subset $A$ of a group $G$, the following statements are equivalent

$(i)$ $A$ is large;

$(ii)$ there exists a finite subset $F$ of $G$ such that, for each $p\in G^*$, we have $Gp=\vt_p(FA)$;

$(iii)$ for every $p\in G^*$, there exists a finite subset $F_p$ of $G$ such that $Gp=\vt_p(F_pA)$.\end{Ps}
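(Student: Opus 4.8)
The plan is to prove the cycle $(i)\Rightarrow(ii)\Rightarrow(iii)\Rightarrow(i)$, where the first two implications are routine and essentially all the content sits in $(iii)\Rightarrow(i)$, which I would handle by contraposition. For $(i)\Rightarrow(ii)$, I would fix a finite $F$ with $FA=G$ and simply invoke the identity $\vt_p(G)=Gp$ recorded in the introduction: then $\vt_p(FA)=\vt_p(G)=Gp$ for every $p\in G^*$, so one and the same $F$ works uniformly. The implication $(ii)\Rightarrow(iii)$ is immediate by taking $F_p=F$ for all $p$.

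For $(iii)\Rightarrow(i)$ I would assume $A$ is not large and produce a single ultrafilter $q\in G^*$ for which no finite $F$ satisfies $Gq=\vt_q(FA)$, thereby negating $(iii)$. The first step is to upgrade non-largeness to an infiniteness statement: for every finite $F$ the set $G\setminus FA$ is not merely nonempty but infinite. Indeed, if $G\setminus FA$ were finite, say equal to $\{x_1,\dots,x_n\}$, then fixing any $a\in A$ (possible since $A$ is infinite, hence nonempty) and writing $x_i=(x_ia^{-1})a$ gives $G=(F\cup\{x_1a^{-1},\dots,x_na^{-1}\})A$, making $A$ large; this is exactly where infiniteness of $A$ enters.

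The second step is the ultrafilter construction. The family $\{G\setminus FA : F\subseteq G \text{ finite}\}$ is closed under finite intersections, since $(G\setminus F_1A)\cap(G\setminus F_2A)=G\setminus(F_1\cup F_2)A$, so it is the base of a filter all of whose members are infinite. Moreover, because $\bigcup_{g\in G}gA=G$, the total intersection $\bigcap_F(G\setminus FA)$ is empty, so no principal ultrafilter can extend this filter; I would then take any extending ultrafilter $q$, which therefore lies in $G^*$. Finally, I would verify that $q$ refutes $(iii)$: for each finite $F$ we have $G\setminus FA\in q$, hence $FA\notin q$, i.e. $q\notin(FA)^*$; since $q=eq\in Gq$ but $q\notin(FA)^*\cap Gq=\vt_q(FA)$, we conclude $Gq\neq\vt_q(FA)$ for every finite $F$.

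The main obstacle is precisely this construction in $(iii)\Rightarrow(i)$: one must be careful that infiniteness of $A$ is what converts $G\setminus FA$ from nonempty to infinite, and that the empty global intersection of the $G\setminus FA$ is what forces the extending ultrafilter to be free rather than principal, so that $q$ genuinely belongs to $G^*$.
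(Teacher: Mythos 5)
Your proof is correct, but your handling of $(iii)\Rightarrow(i)$ takes the dual route to the paper's. The paper argues directly: given $(iii)$, each $p\in G^*$ lies in $(F_pA)^*$ (since $p=ep\in Gp=\vt_p(F_pA)$), so the clopen sets $(F_pA)^*$ cover the compact space $G^*$; a finite subcover $(F_{p_1}A)^*,\dots,(F_{p_n}A)^*$ with $F=F_{p_1}\cup\dots\cup F_{p_n}$ gives $G^*=(FA)^*$, hence $G\setminus FA$ is finite, and one finishes exactly as you do, by covering the finite leftover set with translates of the nonempty set $A$. You instead argue by contraposition, building a single free ultrafilter $q$ containing every $G\setminus FA$ from the finite intersection property of that family; this is essentially the ultrafilter-lemma proof of the very compactness the paper invokes, so the two arguments are mirror images. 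Yours is more self-contained (no appeal to the topology of $G^*$ beyond the definition of $(FA)^*$), while the paper's is shorter given the standard machinery. One small remark: your step upgrading $G\setminus FA$ from nonempty to infinite is dispensable. Closure under intersection, $(G\setminus F_1A)\cap(G\setminus F_2A)=G\setminus(F_1\cup F_2)A$, already yields a proper filter from nonemptiness alone, and freeness of the extending ultrafilter follows purely from the empty total intersection $\bigcap_F(G\setminus FA)=G\setminus\bigcup_{g\in G}gA=\varnothing$ (any principal ultrafilter at $x$ would require $x$ to lie in every $G\setminus FA$). So the only place where infiniteness (in fact, mere nonemptiness) of $A$ is genuinely needed is in showing $\bigcup_{g\in G}gA=G$, and correspondingly in the paper's final step $G=HFA$.
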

\begin{proof} The implications $(i)\Rightarrow (ii)\Rightarrow(iii)$ are evident. To prove $(iii)\Rightarrow(i)$, we note that the family $\{(F_pA)^*:p\in G\}$ is a covering of $G^*$, choose a finite subcovering $(F_{p_1}A)^*,...,(F_{p_n}A)^*$ and put $F=F_{p_1}\cup...\cup F_{p_n}$. Then $G^*=(FA)^*$ so $G\setminus FA$ is finite and $G=HFA$ for some finite subset $H$ of $G$. Hence, $A$ is large. \end{proof}
\begin{Ps} For an infinite subset $A$ of a group $G$ the following statements hold

$(i)$ $A$ is thick if and only if there exists $p\in G^*$ such that $\vt_p(A)=Gp$;

$(ii)$ $A$ is prethick if and only if there exists $p\in G^*$ and a finite subset $F$ of $G$ such that $\vt_p(FA)=Gp$;

$(iii)$ $A$ is small if and only if, for every $p\in G^*$ and each finite $F$ of $G$, we have $\vt_p(FA)\neq Gp$;

$(iv)$ $A$ is thin if and only if $|\vt_p(A)|\leq1$ for each $p\in G^*$.\end{Ps}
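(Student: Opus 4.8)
The plan is to translate everything into the ultrafilter language through the basic equivalence $gp\in\vt_p(A)\iff A\in gp\iff g^{-1}A\in p$, so that the condition $\vt_p(A)=Gp$ becomes the assertion that the whole family $\{g^{-1}A:g\in G\}$ is contained in $p$. Parts (i)--(iii) all reduce to the thickness criterion in (i), while (iv) is handled separately.

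For (i), note that $\vt_p(A)=Gp$ holds for some $p\in G^*$ precisely when the family $\{g^{-1}A:g\in G\}$ extends to a \emph{free} ultrafilter, and this is possible exactly when every finite intersection $\bigcap_{f\in F}f^{-1}A$ is infinite. Since $a\in\bigcap_{f\in F}f^{-1}A$ means $Fa\subseteq A$, mere nonemptiness of these intersections is the definition of thickness, so the only real work is promoting ``nonempty'' to ``infinite''. I would do this by pigeonhole: were $\bigcap_{f\in F}f^{-1}A=\{a_1,\dots,a_n\}$ finite and nonempty, then applying thickness to each finite set $F\cup Fg$ would produce, for every $g\in G$, an $a$ with $a,ga\in\bigcap_{f\in F}f^{-1}A$, forcing $g=a_ja_i^{-1}$ for some $i,j$; only finitely many $g$ qualify, contradicting $|G|=\infty$. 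The converse is immediate, since $\vt_p(A)=Gp$ makes each $\bigcap_{f\in F}f^{-1}A$ a member of $p$, hence nonempty.

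Statement (ii) then follows at once by applying (i) to $FA$, as $A$ prethick means $FA$ is thick for some finite $F$. For (iii) I would prove the combinatorial equivalence \emph{$A$ is small $\iff$ $A$ is not prethick} and combine it with (ii). The engine is the duality ``$B$ is thick $\iff G\setminus B$ is not large''; applying it to $L\setminus A$ rewrites smallness as: for every non-thick $C$, the set $C\cup A$ is not thick. From here I would note that thickness is left- and right-translation invariant, deduce that small sets are closed under translation and finite unions, and conclude: if $A$ were small and prethick, then $FA=\bigcup_{f\in F}fA$ would be a finite union of translates of the small set $A$, hence small, contradicting that a thick set is never small (take $L=G$). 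The reverse implication I would prove in contrapositive form: if $A$ is not small, the reformulation gives a non-thick $C$ with $C\cup A$ thick; writing $C=G\setminus L$ with $EL=G$ and applying thickness to $E^{-1}K$ yields $a$ with $E^{-1}Ka\subseteq C\cup A$, and then $ka\in EL$ forces each $ka$ into $A$, so $EA$ is thick and $A$ is prethick.

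Part (iv) is independent, and its converse is the main obstacle. The forward direction is easy: two distinct points $g_1p,g_2p\in\vt_p(A)$ give $g_1^{-1}A\cap g_2^{-1}A\in p$, which is therefore infinite, so after translation $A\cap(g_1g_2^{-1})A$ is infinite with $g_1g_2^{-1}\ne e$ and $A$ is not thin. For the converse, from $A\cap gA$ infinite with $g\ne e$ I must build a \emph{single} $p$ carrying two companions, and the naive choice of any $p\ni A\cap gA$ can fail because $p$ and $g^{-1}p$ may coincide. I would remove this difficulty by first extracting an infinite $B'\subseteq A\cap gA$ with $B'\cap gB'=\varnothing$, chosen as an independent set for left translation by $g$: since left multiplication acts freely, I pick one point per orbit when $g$ has finite order, and a non-consecutive subsequence within an orbit when $g$ has infinite order. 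Any free $p\ni B'$ then satisfies $g^{-1}p\ne p$, so $p=ep$ and $g^{-1}p$ are two distinct members of $\vt_p(A)$, giving $|\vt_p(A)|\ge 2$ and hence $A$ is not thin.
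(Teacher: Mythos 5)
Your proposal is correct, and its skeleton is the same as the paper's: part (i) via an ultrafilter containing the thickness-witness sets (your $\bigcap_{f\in F}f^{-1}A$ is the paper's $P_F=\{x\in A:Fx\subseteq A\}$ up to adjoining $e$ to $F$), part (ii) by applying (i) to $FA$, part (iii) via the equivalence ``small $\Leftrightarrow$ not prethick'', and part (iv) by direct translation into ultrafilter language. The genuine difference is self-containedness: you prove three things the paper asserts or cites. In (i) the paper claims each $P_F$ is infinite ``since $A$ is thick'', which literally gives only nonemptiness; your pigeonhole argument (applying thickness to $F\cup Fg$ and bounding the possible $g$ by $a_ja_i^{-1}$) is exactly the missing justification, and it correctly uses that $G$ is infinite. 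In (iii) the paper merely ``notes'' that small equals not prethick; you derive it from the duality ``$B$ thick iff $G\setminus B$ not large'', obtaining closure of small sets under finite unions and translations along the way, and your contrapositive argument producing $Ka\subseteq EA$ is sound. In (iv), where the paper says the claim ``follows directly from the definitions'', you identify the one nontrivial point --- that $p$ and $g^{-1}p$ could a priori coincide --- and resolve it by extracting an infinite $B'\subseteq A\cap gA$ with $B'\cap gB'=\varnothing$; this is a self-contained substitute for the standard fact that left translation by $g\neq e$ has no fixed points in $\beta G$ (the three-set lemma), a fact the paper implicitly relies on here and again in its Proposition 4. One wording slip in (iii): from $ka\in EL$ and $E^{-1}Ka\subseteq C\cup A$ you get some $e'^{-1}ka\in A$, hence $ka\in EA$, not ``$ka\in A$''; since your stated conclusion is that $EA$ is thick, this is clearly what you meant and does not affect the argument.
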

\begin{proof}~\label{p3} $(i)$ Assume that $A$ is thick. For each finite subset $F$ of $G$, we put $P_F=\{x\in A:Fx\subset A\}$ and form a family $\PP=\{P_F:F  \text{ is a finite subset of } $G$\}$. Since $A$ is thick, each subset $P_F$ is infinite. Clearly, $P_F\cap P_H=P_{P\cup H}$. Therefore, $\PP$ is contained in some ultrafilter $p\in G^*$. By the choice of $\PP$, we have, $\vt_p(A)=Gp$.

On the other hand, let $\vt_p(A)=Gp$. We take an arbitrary finite subset $F$ of $G$. Then $(F\cup\{e\})p\subset A^*$ so $(F\cup\{e\})P\subset A$ for some $P\in p$. Hence, $P\subseteq A$ and $Fx\subset A$ for each $x\in P$.

$(ii)$ follows from $(i)$.

$(iii)$ We note that $A$ is small if and only if $A$ is not prethick and apply $(ii)$.

$(iv)$ follows directly from the definitions of thin subsets and $\vt_p(A)$. \end{proof}

For $n\in\NN$, a subset $A$ of a group $G$ is called $n$-thin if, for every finite subset $F$ of $G$, there is a finite subset $H$ of $G$ such that $|Fg\cap A|\le n$ for every $g\in G\setminus H$.
\begin{Ps} For a subset $A$ of a group $G$, the following statements are equivalent

$(i)$ $|\vt_p(A)|\le n$ for each $p\in G^*$;

$(ii)$ for every distinct $x_1,..,x_{n+1}\in G$, the set $x_1A\cap...\cap x_{n+1}A$ is finite;

$(iii)$ $A$ is $n$-thin. \end{Ps}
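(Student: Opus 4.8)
The plan is to prove the equivalence of the three statements by establishing the cycle $(i)\Leftrightarrow(ii)$ and $(ii)\Leftrightarrow(iii)$, working from the definitions and exploiting the correspondence between ultrafilters and finite intersection properties. The key observation linking everything is that $\vt_p(A)=\{gp: g\in G,\ A\in gp\}$, and $A\in gp$ precisely when $g^{-1}A\in p$, i.e. when $p\in\overline{g^{-1}A}$. Thus $|\vt_p(A)|$ counts the distinct cosets $gp$ (equivalently, distinct left shifts $g$ modulo the stabilizer of $p$) for which $p\in\overline{g^{-1}A}$.

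For $(i)\Leftrightarrow(ii)$ I would argue as follows. Suppose $(ii)$ fails, so there exist distinct $x_1,\dots,x_{n+1}\in G$ with $x_1A\cap\cdots\cap x_{n+1}A$ infinite. Then this intersection, being infinite, extends to some $p\in G^*$, and I claim $x_1^{-1}p,\dots,x_{n+1}^{-1}p$ are $n+1$ distinct points of $\vt_p(A)$ computed at a suitable base ultrafilter — more precisely, I would set $q=p$ and check that each $x_i^{-1}A\in q$, giving $x_i^{-1}q\in\vt_q(A)$ with the $x_i^{-1}q$ distinct because the $x_i$ are distinct and $q$ is free (so left multiplication by distinct elements yields distinct ultrafilters). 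This contradicts $(i)$. Conversely, if $(i)$ fails, some $\vt_p(A)$ contains distinct $g_1p,\dots,g_{n+1}p$ with each $A\in g_ip$, i.e. $g_i^{-1}A\in p$; then $g_1^{-1}A\cap\cdots\cap g_{n+1}^{-1}A\in p$ is infinite (a free ultrafilter contains no finite sets), so $(ii)$ fails with $x_i=g_i^{-1}$.

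For $(ii)\Leftrightarrow(iii)$ I would translate the $n$-thin condition into the intersection condition. The definition of $n$-thin says that for every finite $F$ there is a cofinite set of $g$ with $|Fg\cap A|\le n$. I would reconcile the two quantifier patterns by noting that $x_1A\cap\cdots\cap x_{n+1}A$ being infinite is equivalent to saying there are infinitely many $a\in A$ lying in $n+1$ of the translates $x_iA$ simultaneously; rewriting $a\in x_iA$ as $x_i^{-1}a\in A$ and setting $F=\{x_1^{-1},\dots,x_{n+1}^{-1}\}$, this says $|Fb\cap A|\ge n+1$ for infinitely many $b$ (where $b=a$ ranges suitably). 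So the negation of $(iii)$ — existence of a finite $F$ with $|Fg\cap A|\ge n+1$ for infinitely many $g$ — matches exactly the negation of $(ii)$. The main obstacle, and the step demanding the most care, is this bookkeeping: matching the universally-quantified finite $F$ in the $n$-thin definition against the finitely-many-elements condition in $(ii)$, and correctly handling the passage between ``$a\in x_iA$'' and ``$x_i^{-1}a\in A$'' so that the counting $|Fg\cap A|$ lines up with membership in the intersection of translates. I expect the equivalence to come down to observing that $\bigcap_i x_iA$ is infinite for some fixed tuple iff there is a fixed finite $F$ witnessing $|Fg\cap A|>n$ infinitely often, which is a straightforward but notation-heavy reindexing.
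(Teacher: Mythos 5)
Your overall route coincides with the paper's: prove $(i)\Leftrightarrow(ii)$ by realizing an infinite intersection of translates as a member of a free ultrafilter, and prove $(ii)\Leftrightarrow(iii)$ by contraposition, converting between tuples $x_1,\dots,x_{n+1}$ and finite sets $F$ of translating elements. Your $(i)\Leftrightarrow(ii)$ argument is correct apart from one inverse landing on the wrong side: from $x_1A\cap\dots\cap x_{n+1}A\in q$ you get $x_iA\in q$ (not $x_i^{-1}A\in q$), and it is precisely $x_iA\in q$ that gives $A\in x_i^{-1}q$, i.e. $x_i^{-1}q\in\vt_q(A)$. The distinctness of the points $x_i^{-1}q$ for distinct $x_i$ and free $q$ is the standard fact you invoke; the paper uses it silently as well.

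The genuine gap is in $(ii)\Leftrightarrow(iii)$, exactly at the step you call ``straightforward but notation-heavy reindexing'' and claim ``matches exactly.'' The direction $\neg(ii)\Rightarrow\neg(iii)$ does match exactly: if $\bigcap_i x_iA$ is infinite, put $F=\{x_1^{-1},\dots,x_{n+1}^{-1}\}$ and every $g$ in that intersection satisfies $|Fg\cap A|\ge n+1$. But in the direction $\neg(iii)\Rightarrow\neg(ii)$, the finite set $F$ witnessing the failure of $n$-thinness may have far more than $n+1$ elements, and $|Fg_m\cap A|>n$ only says that \emph{some} $(n+1)$-element subset of $F$ sends $g_m$ into $A$ --- a subset that can vary with $m$. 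To produce the \emph{fixed} tuple required by $\neg(ii)$ you must apply a pigeonhole argument: $F$ has only finitely many $(n+1)$-element subsets, so one of them, say $\{x_1,\dots,x_{n+1}\}$, satisfies $\{x_1,\dots,x_{n+1}\}g_m\subseteq A$ for infinitely many $m$, whence $x_1^{-1}A\cap\dots\cap x_{n+1}^{-1}A$ is infinite. This is exactly what the paper does (``passing to subsequences of $(g_m)_{m<\w}$\dots''). It is an easy step, but it is the only real content of $(ii)\Leftrightarrow(iii)$, and as written your plan asserts the two negations coincide rather than proving it.
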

\begin{proof} We note that $x_1A\cap...\cap x_{n+1}A$ is infinite if and only if there exists $p\in G^*$ such that $x_1^{-1}p,...,x_{n+1}^{-1}p\in A^*$. This observation prove the equivalence $(i)\Leftrightarrow(ii)$.

$(ii)\Rightarrow(iii)$ Assume that $A$ is not thin. Then there are a finite subset $F$ of $G$ and an injective sequence $(g_m)_{m<\w}$ in $G$ such that $|Fg_m\cap A|>n$. Passing to subsequences of $(g_m)_{m<\w}$, we may suppose that there exist distinct $x_1,...,x_{n+1}\in F$ such that $\{x_1,...,x_{n+1}\}g_m\subseteq A$ so $x_1^{-1}A\cap...\cap x_{n+1}^{-1}A$ is infinite.

$(iii)\Rightarrow(i)$ Assume that $x_1A\cap...\cap x_{n+1}A$ is infinite for some distinct $x_1,...,x_{n+1}\in G$. Then there is an injective sequence $(g_m)_{m<\w}$ in $x_1A\cap...\cap x_{n+1}A$ such that $\{x_1^{-1},...,x_{n+1}^{-1}\}g_m\subset A$ so $A$ is not $n$-thin. \end{proof}
By ~\cite{b7}, a subset $A$ of a countable group $G$ is $n$-thin if and only if $A$ can be partitioned into $\le n$ thin subsets. The following statements are from ~\cite{b15}. Every $n$-thin subset of an Abelian group of cardinality $\aleph_m$ can be partitioned into $\le n^{m+1}$ thin subsets. For each $m\ge2$ there exist a group $G$ of cardinality $\aleph_n$, $n=\frac{m(m+1)}{2}$ and a $2$-thin subset $A$ of $G$which cannot be partitioned into $m$ thin subsets. Moreover, there is a group $G$ of cardinality $\aleph_\w$ and a $2$-thin subset $A$ of $G$ which cannot be finitely partitioned into thin subsets.

Remind that an ultrafilter $p\in G^*$ is strongly prime if $p\in G^*\setminus\overline{G^*G^*}$.
\begin{Ps} ~\label{p5} For a subset $A$ of a group $G$, the following statements are equivalent

$(i)$ $A$ is sparse;

$(ii)$ every ultrafilter $p\in A^*$ is strongly prime;

$(iii)$ for every infinite subset $X$ of $G$, there exists a finite subset $F\subset X$ such that $\bigcap_{g\in F}gA$ is finite. \end{Ps}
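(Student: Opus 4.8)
The plan is to route everything through the single identity $gp\in A^*\iff A\in gp\iff g^{-1}A\in p$ together with the fact that the left action of $G$ on $G^*$ is free. First I would record the preliminary observation that
$$\vt_p(A)=\{gp:g\in G,\ g^{-1}A\in p\}.$$
The one point needing care is that distinct $g$ must give distinct translates $gp$: if $gp=hp$ with $g\ne h$, then $k:=h^{-1}g\ne e$ fixes $p$, and a short colouring argument rules this out for a free ultrafilter. (For $k$ of infinite order, take a transversal of the left $\langle k\rangle$-orbits and let $B$ be the even powers; then $k^{-1}B=G\setminus B$, so $kp=p$ would force $B\in p\iff G\setminus B\in p$. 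For $k$ of finite order $m$, the sets $B,kB,\dots,k^{m-1}B$ of a transversal partition $G$, and $kp=p$ makes all of them equivalent for $p$, again a contradiction.) Hence $g\mapsto gp$ is injective, so $\vt_p(A)$ is infinite if and only if the index set $\{g\in G:g^{-1}A\in p\}$ is infinite. This reduces the whole proposition to statements about this set.

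For $(i)\Leftrightarrow(iii)$ I would argue the two contrapositives. If $A$ is not sparse, pick $p$ with $\vt_p(A)$ infinite; then $T=\{g:g^{-1}A\in p\}$ is infinite, and $X=T^{-1}$ is an infinite set with $gA\in p$ for every $g\in X$, so every finite intersection $\bigcap_{g\in F}gA$ lies in $p$ and is therefore infinite, which is exactly the negation of $(iii)$. Conversely, if $(iii)$ fails there is an infinite $X$ all of whose finite subfamilies $\{gA:g\in F\}$ have infinite intersection; then $\{gA:g\in X\}$ together with the cofinite filter has the finite intersection property and extends to some $p\in G^*$, whence $g^{-1}p\in A^*$ for every $g\in X$ and $\vt_p(A)\supseteq\{g^{-1}p:g\in X\}$ is infinite by the injectivity above, so $A$ is not sparse.

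For $(i)\Leftrightarrow(ii)$ the bridge is the claim that $A$ fails to be sparse exactly when some product of two free ultrafilters contains $A$, i.e. $A^*\cap G^*G^*\ne\varnothing$. If $A\in uv$ with $u,v\in G^*$, then $\{g:g^{-1}A\in v\}\in u$ is infinite, so infinitely many translates $gv$ lie in $\vt_v(A)$ and $A$ is not sparse; conversely, from $\vt_p(A)$ infinite I take $T=\{g:g^{-1}A\in p\}$, choose a free ultrafilter $u$ with $T\in u$, and then $A\in up$ by the very definition of the product. Since $G^*$ is a subsemigroup, such a product satisfies $uv\in A^*\cap G^*G^*\subseteq A^*\cap\overline{G^*G^*}$, so a non-sparse $A$ carries a non-strongly-prime ultrafilter in $A^*$, which gives $(ii)\Rightarrow(i)$. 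For $(i)\Rightarrow(ii)$, take any $p\in A^*$ that is not strongly prime, so $p\in\overline{G^*G^*}$; since $A\in p$, the clopen set $\overline A$ is a neighbourhood of $p$ and meets $G^*G^*$, producing $u,v\in G^*$ with $uv\in\overline A$, i.e. $A\in uv$, and hence $A$ is not sparse.

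The step I expect to require the most care is the freeness of the action used in the first paragraph: without injectivity of $g\mapsto gp$ an infinite index set $\{g:g^{-1}A\in p\}$ could a priori collapse to a finite companion, and all three equivalences would break. The remaining substantive point is the passage from $p\in\overline{G^*G^*}$ to an honest product $uv\in\overline A$, which uses only that a point of a closure has every neighbourhood meeting the set and that $G^*G^*\subseteq G^*$; everything else is bookkeeping with the defining formula for $uv$ and the finite intersection property.
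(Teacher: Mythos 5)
Your proof is correct. It is organized differently from the paper's, which is very short: the paper proves only $(i)\Leftrightarrow(ii)$, via the single observation that $\vt_p(A)$ is infinite if and only if it has a limit point of the form $qp$, $q\in G^*$, lying in $A^*$, and it outsources $(ii)\Leftrightarrow(iii)$ entirely to a citation (Theorem 9 of \cite{b3}). You instead prove the whole cycle self-containedly. Your ``bridge'' ($A$ is non-sparse iff $A^*\cap G^*G^*\neq\varnothing$) is precisely the combinatorial form of the paper's limit-point observation: an infinite $\vt_p(A)$ makes $T=\{g:g^{-1}A\in p\}$ infinite, any free $u$ containing $T$ gives $up\in A^*$, and conversely a product $uv\in A^*$ forces $\vt_v(A)$ to be infinite. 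Meanwhile your direct argument for $(i)\Leftrightarrow(iii)$ --- via the finite intersection property of $\{gA:g\in X\}$ together with the cofinite filter --- replaces the external reference, so your write-up proves the Filali--Lutsenko--Protasov equivalence rather than quoting it. You also make explicit two facts the paper leaves silent: the freeness of the left action of $G$ on $G^*$ (without which an infinite index set $T$ could a priori collapse to a finite companion, exactly as you note), and the passage from $A^*\cap\overline{G^*G^*}\neq\varnothing$ to $A^*\cap G^*G^*\neq\varnothing$ using that $\overline{A}$ is clopen. What the paper's route buys is brevity; what yours buys is a self-contained argument in which the hidden lemmas are actually verified.
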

\begin{proof} The equivalence $(ii)\Leftrightarrow(iii)$ was proved in ~\cite[Theorem 9]{b3}.

To prove $(i)\Leftrightarrow(ii)$, it suffices to note that $\vt_p(A)$ is infinite if and only if $\vt_p(A)$ has a limit point $qp$, $q\in G^*$ in $A^*$. \end{proof}

\begin{Ps}~\label{p6} A subset $A$ of a group $G$ is sparse if and only if, for every countable subgroup $H$ of $G$, $A\cap H$ is sparse in $H$. \end{Ps}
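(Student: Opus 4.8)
The plan is to run everything through criterion (iii) of Proposition~\ref{p5}: a subset $B$ of a group $K$ is sparse in $K$ if and only if for every infinite $X\subseteq K$ there is a finite $F\subset X$ with $\bigcap_{g\in F}gB$ finite. Equivalently, $B$ is \emph{not} sparse in $K$ precisely when there is an infinite $X\subseteq K$ such that $\bigcap_{g\in F}gB$ is infinite for every finite $F\subset X$. Both implications are most cleanly phrased through this purely combinatorial reformulation rather than through ultracompanions directly.

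For the forward implication, suppose $A$ is sparse in $G$ and fix a countable subgroup $H$. Given an infinite $X\subseteq H$, sparseness of $A$ yields a finite $F\subset X\subseteq H$ with $\bigcap_{g\in F}gA$ finite; since $g(A\cap H)=gA\cap H$ for every $g\in H$, we get $\bigcap_{g\in F}g(A\cap H)=H\cap\bigcap_{g\in F}gA$, which is finite. Hence $A\cap H$ is sparse in $H$. This direction is routine and carries no real difficulty.

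The content lies in the converse, which I would prove by contraposition via a closing-off construction. Assume $A$ is not sparse in $G$ and pick, through the criterion, an infinite $X\subseteq G$ with $\bigcap_{g\in F}gA$ infinite for every finite $F\subset X$. Choose a countably infinite $X_0\subseteq X$; the family $\mathcal{F}$ of all finite subsets of $X_0$ is countable, so for each $F\in\mathcal{F}$ I may select a countably infinite subset $Y_F\subseteq\bigcap_{g\in F}gA$. Put $H=\langle X_0\cup\bigcup_{F\in\mathcal{F}}Y_F\rangle$. Its generating set is a countable union of countable sets, so $H$ is a countable subgroup, and the goal becomes showing that $A\cap H$ fails to be sparse in $H$, witnessed again by the infinite set $X_0\subseteq H$.

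I expect the main obstacle to be exactly this last point: upgrading $Y_F\subseteq\bigcap_{g\in F}gA$ to $Y_F\subseteq\bigcap_{g\in F}g(A\cap H)$. For $y\in Y_F$ and $g\in F$ we know $g^{-1}y\in A$; since $g\in F\subseteq X_0\subseteq H$ and $y\in Y_F\subseteq H$, it follows that $g^{-1}y\in A\cap H$, i.e. $y\in g(A\cap H)$. Thus each $\bigcap_{g\in F}g(A\cap H)$ contains the infinite set $Y_F$, and the criterion gives that $A\cap H$ is not sparse in $H$, completing the contrapositive. The delicate aspect is that one must pre-commit countably many witnesses from each of the countably many intersections \emph{before} forming $H$, so that every required element $g^{-1}y$ lands back inside $H$ automatically; this is what makes a single round of closure suffice, with no iteration needed.
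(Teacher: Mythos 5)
Your proof is correct and takes essentially the same approach as the paper: both reduce everything to Proposition~\ref{p5}(iii) and prove the contrapositive by closing a countable witness set off into a countable subgroup $H$ in a single round. The only cosmetic difference is that the paper uses one diagonal sequence $a_n\in x_0A\cap\dots\cap x_nA$ and explicitly adds the translates $x_i^{-1}a_n$ to the generating set, whereas you keep an infinite witness set $Y_F$ for each finite $F$ and let closure of $H$ under the group operations supply those translates automatically.
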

\begin{proof} Assume that $A$ is not sparse. By Proposition~\ref{p5} $(iii)$, there is a countable subset  $X=\{x_n:n<\w\}$ of $G$ such that for any $n<\w$ $x_0A\cap...\cap x_nA$ is infinite. For any $n<\w$, we pick $a_n\in x_0A\cap...\cap x_nA$, put $S=\{x_0^{-1}a_n,...,x_n^{-1}a_n:n<\w\}$ and denote by $H$ the subgroup of $G$ generated by $S\cup X$. By Proposition~\ref{p5}$(iii)$, $A\cap H$ is not sparse in $H$. \end{proof}
A family $\II$ of subsets of a group $G$ is called an ideal in the Boollean algebra $\PP_G$ of all subsets of $G$ if $A,B\in\II$ implies $A\cup B\in\II$, and $A\in\II$, $A'\subset A$ implies $A'\in\II$. An ideal $\II$ is left (right) translation invariant if $gA\in\II$ ($Ag\in\II$) for each $A\in\II$.
\begin{Ps} The family $Sp_G$ of all sparse subsets of a group $G$ is a left and right translation invariant ideal in $\PP_G$\end{Ps}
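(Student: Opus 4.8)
The plan is to derive every required closure property directly from the translation and union formulas for $p$-companions recorded in Proposition~\ref{p1}, together with the defining property that $A$ is sparse precisely when $\vt_p(A)$ is finite for all $p\in G^*$. Since "sparse" is a statement quantified over all ultrafilters $p\in G^*$, and since each of the three identities in Proposition~\ref{p1} holds for \emph{every} such $p$, each closure property will reduce to the observation that finite sets are preserved under the relevant operation (subset, finite union, or translation of a finite set of ultrafilters). No genuinely hard step arises; the work is purely in invoking the correct identity for each case.

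First I would verify the two ideal axioms. For the downward-closure under inclusion, I note that the defining formula $\vt_p(A)=A^*\cap Gp$ is monotone in $A$: if $A'\subseteq A$ then $(A')^*\subseteq A^*$, hence $\vt_p(A')\subseteq\vt_p(A)$ for every $p\in G^*$; thus if $A$ is sparse then so is every $A'\subseteq A$. For closure under finite unions it suffices, by induction, to treat the union of two sets. If $A$ and $B$ are sparse, then for each $p\in G^*$ the identity $\vt_p(A\cup B)=\vt_p(A)\cup\vt_p(B)$ from Proposition~\ref{p1}$(iii)$ exhibits $\vt_p(A\cup B)$ as the union of two finite sets, hence finite; therefore $A\cup B$ is sparse. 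This establishes that $Sp_G$ is an ideal in $\PP_G$.

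Next I would establish the two invariance properties. For left invariance, fix $g\in G$ and a sparse set $A$. Applying Proposition~\ref{p1}$(i)$ with the singleton $F=\{g\}$ gives $\vt_p(gA)=g\,\vt_p(A)$ for every $p\in G^*$, and since $\vt_p(A)$ is finite its left translate $g\,\vt_p(A)$ is finite as well; hence $gA$ is sparse. For right invariance, fix $h\in G$ and a sparse $A$. Proposition~\ref{p1}$(ii)$ yields $\vt_p(Ah)=\vt_{ph^{-1}}(A)$ for every $p\in G^*$. The only point requiring a word of justification is that $ph^{-1}$ again lies in $G^*$: the right shift $q\mapsto qh^{-1}$ is a bijection of $\beta G$ carrying $G$ onto $G$, so it carries $G^*=\beta G\setminus G$ onto itself, whence $ph^{-1}\in G^*$. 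Sparseness of $A$ then makes $\vt_{ph^{-1}}(A)$ finite, so $\vt_p(Ah)$ is finite and $Ah$ is sparse.

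Combining these observations shows that $Sp_G$ is closed under taking subsets, under finite unions, and under both left and right translations, which is exactly the assertion that $Sp_G$ is a left and right translation invariant ideal in $\PP_G$. The anticipated "obstacle" is really only the minor bookkeeping point above—confirming $ph^{-1}\in G^*$—so that Proposition~\ref{p1}$(ii)$ applies with an ultrafilter of the correct type; everything else is an immediate application of Proposition~\ref{p1}.
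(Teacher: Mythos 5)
Your proof is correct and follows exactly the route the paper intends: its entire proof is the single line ``Apply Proposition~\ref{p1},'' and your argument simply carries out that application in detail (monotonicity for downward closure, part $(iii)$ for unions, part $(i)$ for left translates, part $(ii)$ for right translates, including the minor check that $ph^{-1}\in G^*$). No discrepancies.
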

\begin{proof} Apply Proposition~\ref{p1}. \end{proof}
\begin{Ps}~\label{p8} For a subset $A$ of a group $G$, the following statements are equivalent

$(i)$ $A$ is disparse;

$(ii)$ if $p\in A^*$ then $p\notin G^*p$. \end{Ps}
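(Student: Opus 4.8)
The plan is to reduce both implications to a single lemma describing, for a free ultrafilter $r$ and a set $B\in r$, exactly when the base point $r$ is isolated in its own $r$-companion $\vt_r(B)=B^*\cap Gr$. The glue that makes this reduction work is the homogeneity identity $\vt_p(A)=\vt_{gp}(A)$, valid whenever $gp\in A^*$: since $G(gp)=(Gg)p=Gp$, we have $\vt_{gp}(A)=A^*\cap G(gp)=A^*\cap Gp=\vt_p(A)$. Thus every point of $\vt_p(A)$ has the form $gp$ and is the base point of the \emph{same} companion viewed from $gp$, so $\vt_p(A)$ is discrete precisely when each of its points $gp$ is isolated in $\vt_{gp}(A)$.

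The Key Lemma I would isolate is: for $r\in G^*$ and $B\in r$, the point $r$ is isolated in $\vt_r(B)$ if and only if $r\notin G^*r$. Granting this, the Proposition follows immediately. For $(ii)\Rightarrow(i)$, take any $p$ and any $gp\in\vt_p(A)$; then $gp\in A^*$, so $(ii)$ gives $gp\notin G^*(gp)$, and the Lemma with $r=gp$, $B=A$ shows $gp$ is isolated in $\vt_{gp}(A)=\vt_p(A)$; as $gp$ was arbitrary, $\vt_p(A)$ is discrete. For the contrapositive of $(i)\Rightarrow(ii)$, if some $p\in A^*$ satisfies $p\in G^*p$, then the Lemma with $r=p$, $B=A$ makes $p$ a non-isolated point of $\vt_p(A)$, so $A$ is not disparse.

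To prove the Lemma I would work through the basic neighborhoods $\overline C$, $C\in r$, $C\subseteq B$, using that $\overline C\cap\vt_r(B)=\{hr:h\in G,\ C\in hr\}$ and that $C\in hr$ is equivalent to $h^{-1}C\in r$. For the direction $r\in G^*r\Rightarrow r$ non-isolated, write $r=qr$ with $q\in G^*$; since $\rho_r$ is continuous and $\rho_r(q)=r\in\overline C$, the open set $\rho_r^{-1}(\overline C)$ is a neighborhood of the free ultrafilter $q$, hence meets $G$ in an infinite set, giving infinitely many $h\in G$ with $hr\in\overline C$. The point at issue is to secure one such $h$ with $hr\neq r$, so that $\overline C$ meets $\vt_r(B)\setminus\{r\}$. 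For the converse, if $r$ is non-isolated then $r\in\overline{Dr}$ with $D=\{g\neq e:gr\in B^*\}$; as $\rho_r(\overline D)$ is compact, hence closed and equal to $\overline{Dr}$, we obtain $r=\rho_r(s)=sr$ for some $s\in\overline D=D\cup D^*$, and it remains to exclude $s\in D\subseteq G$ so as to land in $G^*$.

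Both loose ends are closed by the same fact, which I expect to be the main obstacle: the left stabilizer of a free ultrafilter is trivial, i.e. $gr=r$ with $r\in G^*$ forces $g=e$. I would prove this by a coloring argument: for $g\neq e$ of order $k$, partition $G$ into the orbits $\langle g\rangle x$ of left multiplication and color each element by its position modulo $k$ (by parity if $g$ has infinite order), obtaining classes $C_0,\dots,C_{k-1}$ (resp. $C_0,C_1$) with $gC_i=C_{i+1}$; the ultrafilter $r$ contains exactly one class $C_i$, so invariance $gr=r$ would force the disjoint class $C_{i+1}$ into $r$ as well, a contradiction. With the stabilizer trivial, the elements $gr$ are pairwise distinct, so among the infinitely many $h$ above some $h\neq e$ has $hr\neq r$ (closing the first direction), and in the converse $s\in D$ is impossible since it would give $s=e\notin D$, leaving $s\in G^*$ and hence $r\in G^*r$.
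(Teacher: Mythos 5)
Your proof is correct and is essentially the argument the paper intends: the paper states this proposition without any proof, treating it as immediate from the definitions. Your three ingredients --- the homogeneity identity $\vt_{gp}(A)=\vt_p(A)$, the compactness argument showing every limit point of $Dr$ (for $D\subseteq G$) has the form $sr$ with $s\in\overline{D}$, and the fixed-point-freeness of left translations on $\beta G$ (your orbit-coloring lemma) --- are exactly the standard facts about $\beta G$ that the authors leave implicit, the second being invoked in the same way in the paper's proof of Proposition~\ref{p5}.
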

Recall that an element $s$ of a semigroup $S$ is right cancelable if, for any $x,y\in S$, $xs=ys$ implies $x=y$.
\begin{Ps}~\label{p9} A subset $A$ of a countable group $G$ is disparse if and only if each ultrafilter $p\in A^*$ is right cancelable in $\beta G$.\end{Ps}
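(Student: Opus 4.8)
The plan is to read off the result from the characterization of disparse sets in Proposition~\ref{p8} and to identify the condition appearing there with right cancelability. Proposition~\ref{p8} says that $A$ is disparse exactly when no $p\in A^*$ satisfies $p\in G^*p$. Hence it suffices to prove, for a single ultrafilter $p\in G^*$ of a countable group $G$, the equivalence
$$p\ \text{is right cancelable in}\ \beta G\iff p\notin G^*p,$$
and then to quantify over $p\in A^*$.

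First I would dispose of the implication that needs no countability: right cancelability implies $p\notin G^*p$. Indeed, suppose $p=qp$ for some $q\in G^*$. Writing $e$ for the identity of $G$ and identifying it with the corresponding principal ultrafilter, we have $ep=p=qp$, so $ep=qp$; right cancelability forces $e=q$, which is absurd since $q$ is free and $e$ is principal. (The only collisions to rule out are of this form: no free ultrafilter satisfies $gp=p$ for $g\neq e$, since left translation by $g$ is fixed-point-free on $G$, whence a suitable partition of $G$ along the $\langle g\rangle$-orbits yields a set $B\in p$ with $gB\notin p$, forcing $gp\neq p$.) Quantifying over $p\in A^*$ and invoking Proposition~\ref{p8}, this already gives that if every $p\in A^*$ is right cancelable, then $A$ is disparse.

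For the converse I would pass through the set $G^*p$. Since the right shift $\rho_p\colon x\mapsto xp$ is continuous and $G^*$ is a closed, hence compact, subset of $\beta G$, the image $G^*p=\rho_p(G^*)$ is compact and therefore closed in the Hausdorff space $\beta G$; consequently $p\notin G^*p$ is the same as $p\notin\overline{G^*p}$. Now I would invoke the Hindman--Strauss criterion for right cancelability in $\beta G$ of a countable group (see \cite{b5}): for $p\in G^*$, the element $p$ is right cancelable in $\beta G$ if and only if $p\notin\overline{G^*p}$. Together with the previous paragraph this yields the displayed equivalence; quantifying over $p\in A^*$ and applying Proposition~\ref{p8} once more shows that if $A$ is disparse, then every $p\in A^*$ is right cancelable.

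The main obstacle is precisely the nontrivial direction of that criterion, $p\notin\overline{G^*p}\Rightarrow p$ right cancelable, which is where countability of $G$ is indispensable, and which I would either cite or, for a self-contained treatment, establish in the standard way: fix an enumeration of $G$, choose a member of $p$ witnessing $p\notin\overline{G^*p}$, and run a diagonal construction along that enumeration to force injectivity of $\rho_p$. The remaining ingredients---the reduction via Proposition~\ref{p8}, the elementary implication, and the closedness of $G^*p$---are routine.
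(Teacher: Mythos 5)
Your proposal is correct and follows essentially the same route as the paper: reduce to Proposition~\ref{p8} and invoke the Hindman--Strauss characterization (\cite[Theorem 8.18]{b5}) of right cancelable ultrafilters over a countable group, namely $p$ is right cancelable in $\beta G$ iff $p\notin G^*p$. Your added details---the self-contained proof of the easy direction and the observation that $G^*p$ is compact, hence closed, so the conditions $p\notin G^*p$ and $p\notin\overline{G^*p}$ coincide---are correct but do not change the substance of the argument, which in both cases rests on citing that theorem.
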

\begin{proof} By ~\cite[Theorem 8.18]{b5}, for a countable group $G$, an ultrafilter $p\in G^*$ is right cancelable in $\beta G$ if and only if $p\notin G^*p$. Apply Proposition~\ref{p8}\end{proof}
\begin{Ps}~\label{p10} The family $dSp_G$ of all disparse subsets of a group $G$ is a left and right translation invariant ideal in $\PP_G$.\end{Ps}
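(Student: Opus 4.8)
The plan is to show that $dSp_G$ is closed under finite unions, under passage to subsets, and under left and right translations, using the ultracompanion characterization from Proposition~\ref{p8} together with the calculus of $p$-companions in Proposition~\ref{p1}. Let me sketch each requirement.

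First I would verify the ideal axioms. Closure under subsets is immediate: if $A' \subseteq A$ then $\vt_p(A') \subseteq \vt_p(A)$ for every $p \in G^*$, so if every ultracompanion of $A$ is discrete then so is every ultracompanion of the smaller set $A'$ (a subset of a discrete set is discrete in the subspace topology). For closure under finite unions, I would use Proposition~\ref{p1}$(iii)$, which gives $\vt_p(A \cup B) = \vt_p(A) \cup \vt_p(B)$. Here the only genuine point to check is that the union of two discrete subspaces of $G^*$ is again discrete. This is not automatic for arbitrary subspaces, so I expect this to be the main obstacle. The cleanest route is to invoke Proposition~\ref{p8}$(ii)$ directly rather than arguing topologically: $A \cup B$ is disparse iff no $p \in (A\cup B)^*$ satisfies $p \in G^*p$; since $(A \cup B)^* = A^* \cup B^*$, any such $p$ lies in $A^*$ or in $B^*$, and disparseness of $A$ and of $B$ rules out both cases. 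This reduces the union property to a set-theoretic statement about the clopen sets $A^*, B^*$ and sidesteps the subtlety about discreteness of unions entirely.

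For the translation invariance I would again lean on Proposition~\ref{p1}. For right translations, part $(ii)$ gives $\vt_p(Ah) = \vt_{ph^{-1}}(A)$, so the ultracompanions of $Ah$ are exactly the ultracompanions of $A$ (re-indexed by $p \mapsto ph^{-1}$, which is a bijection of $G^*$); hence $Ah$ is disparse whenever $A$ is. For left translations by a single element $g$, note that $gA = \{g\}A$ is a special case of $FA$ with $F = \{g\}$, and part $(i)$ gives $\vt_p(gA) = g\,\vt_p(A)$; since left translation by $g$ is a homeomorphism of $\beta G$, it carries discrete subspaces to discrete subspaces, so $gA$ is disparse. Alternatively, and more in keeping with the union argument, one can use Proposition~\ref{p8}: $p \in (gA)^* = g(A^*)$ means $g^{-1}p \in A^*$, and one checks that $p \in G^*p$ transfers to $g^{-1}p \in G^*(g^{-1}p)$, contradicting disparseness of $A$.

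Accordingly, the write-up would consist of three short verifications keyed to Propositions~\ref{p1} and~\ref{p8}, with the subset and right-translation cases being one-line reductions, the left-translation case following from $\vt_p(gA) = g\vt_p(A)$ and the fact that left shifts are homeomorphisms, and the union case handled via the equivalence in Proposition~\ref{p8}. I would present the proof as a single short paragraph that simply says ``Apply Propositions~\ref{p1} and~\ref{p8}'', spelling out only the union step in a sentence or two, since that is the one place where naive topological reasoning about discreteness could fail.
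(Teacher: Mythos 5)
Your proposal is correct, and it diverges from the paper's own proof precisely at the step you identified as the delicate one. The paper handles closure under unions topologically: assuming $A\cup B$ is not disparse, it picks a non-isolated point $gp$ of $\vt_p(A\cup B)$, notes that $gp\in A^*$ or $gp\in B^*$, and concludes that $gp$ is non-isolated in $\vt_p(A)$ or in $\vt_p(B)$. That conclusion is exactly the point you worried about (a union of two discrete subspaces need not be discrete, and a point non-isolated in a union can be isolated in each piece); the paper's argument is nevertheless sound because $A^*$ and $B^*$ are \emph{clopen} in $G^*$, so $\vt_p(A)=\vt_p(A\cup B)\cap A^*$ is relatively open in the union, and non-isolatedness localizes to whichever piece contains $gp$. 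The paper leaves this clopenness appeal implicit. Your route through Proposition~\ref{p8} avoids the issue altogether: disparseness becomes the pointwise condition that no $p\in A^*$ satisfies $p\in G^*p$, and since $(A\cup B)^*=A^*\cup B^*$ the union step is a one-line set-theoretic check. That is a genuine simplification, at the modest cost of invoking Proposition~\ref{p8} (which the paper states without proof) rather than working directly from the definition. For translation invariance both you and the paper appeal to Proposition~\ref{p1}; your extra observations --- that $p\mapsto ph^{-1}$ is a bijection of $G^*$ for the right shift, and that left translation is a homeomorphism of $\beta G$ (or, alternatively, that $p\in G^*p$ transfers to $g^{-1}p\in G^*(g^{-1}p)$ since $g^{-1}G^*=G^*$) --- are exactly the details the paper compresses into ``apply Proposition~\ref{p1}.'' Either write-up would be acceptable; yours is the more robust of the two, since it does not depend on an unstated topological fact.
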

\begin{proof} Assume that $A\cup B$ is not disparse and pick $p\in G^*$ such that $\vt_P(A\cup B)$ has a non-isolated point $gq$. Then either $gp\in A^*$ or $gp\in B^*$ so $gp$ is non-isolated either in $\vt_p(A)$ or in $\vt_p(B)$.

To see that $dSp_G$ is translation invariant, we apply Proposition~\ref{p1}. \end{proof}
For an injective sequence $(a_n)_{n<\w}$ in a group $G$, we denote
$$FP(a_n)_{n<\w}=\{a_{i_1}a_{i_2}...a_{i_n}:i_1<...<i_n<\w\}.$$
\begin{Ps}~\label{p11} For every disparse subset $A$ of a group $G$, the following two equivalent statements hold

$(i)$ if $q$ is an idempotent from $G^*$ and $g\in G$ then $qg\notin A^*$;

$(ii)$ for each injective sequence $(a_n)_{n<\w}$ in $G$ and each $g\in G$, $FP(a_n)_{n<\w}g\setminus A$ is infinite. \end{Ps}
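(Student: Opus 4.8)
The plan is to treat the proposition as two separate tasks: showing that $(i)$ holds for any disparse $A$, which is a one-line semigroup computation, and proving the equivalence $(i)\Leftrightarrow(ii)$, which carries the real content and relies on the classical theory of finite-product sets. Throughout I would use the elementary identity $A\in rg\Leftrightarrow Ag^{-1}\in r$ for $r\in\beta G$ and $g\in G$, which follows at once from the product formula applied to the principal ultrafilter $g$.

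To prove $(i)$ for disparse $A$, I would argue by contradiction. Suppose $q\in G^*$ is an idempotent, $g\in G$, and $qg\in A^*$; set $p=qg$, so $p\in A^*$. Associativity together with $qq=q$ gives $qp=q(qg)=(qq)g=qg=p$, and since $q\in G^*$ this shows $p\in G^*p$. But by Proposition~\ref{p8} disparseness of $A$ forces $p\notin G^*p$ whenever $p\in A^*$, a contradiction. Hence $qg\notin A^*$ for every idempotent $q\in G^*$ and every $g$, which is exactly $(i)$; the equivalence will then yield $(ii)$ for free.

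For the equivalence I would fix $g$ and argue $\neg(i)\Leftrightarrow\neg(ii)$. Via the identity above, $\neg(i)$ says there is an idempotent $q$ with $Ag^{-1}\in q$, while $\neg(ii)$ says there is an injective sequence $(a_n)$ with $FP(a_n)g\setminus A$ finite, equivalently $FP(a_n)\setminus Ag^{-1}$ finite. For $\neg(i)\Rightarrow\neg(ii)$, the Galvin--Glazer form of Hindman's theorem \cite{b5} produces, from the idempotent $q$ with $Ag^{-1}\in q$, a sequence $(a_n)$ with $FP(a_n)\subseteq Ag^{-1}$, which may be taken injective by discarding already-used points at each step; then $FP(a_n)g\subseteq A$. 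For $\neg(ii)\Rightarrow\neg(i)$, I would run the construction in reverse: the sets $(FP((a_n)_{n\ge m}))^*$ form a nested family of nonempty clopen subsets of $G^*$ whose intersection is, by the standard theory of $FP$-sets \cite{b5}, a compact subsemigroup of $G^*$ and so contains an idempotent $q$; since $FP(a_n)\in q$, $q$ is free, and $FP(a_n)\setminus Ag^{-1}$ is finite, we conclude $Ag^{-1}\in q$.

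The main obstacle is the implication $\neg(ii)\Rightarrow\neg(i)$: manufacturing an idempotent ultrafilter out of a set of finite products. This rests on the two nontrivial pillars of the subject---that $\bigcap_m\overline{FP((a_n)_{n\ge m})}$ is a subsemigroup, and that every compact right topological semigroup contains an idempotent (Ellis)---both of which I would simply quote from \cite{b5}. The one point needing genuine care is that the idempotent obtained lies in $G^*$ rather than being the principal ultrafilter at $e$; this is why I intersect with $G^*$ before extracting the idempotent, using that $G^*$ is itself a subsemigroup so that $(\bigcap_m\overline{FP((a_n)_{n\ge m})})\cap G^*$ remains a nonempty compact subsemigroup.
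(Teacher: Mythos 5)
Your proposal is correct and takes essentially the same approach as the paper: statement $(i)$ follows from Proposition~\ref{p8} via the computation $q(qg)=(qq)g=qg$, and the equivalence $(i)\Leftrightarrow(ii)$ rests on the same two facts from \cite{b5} (Galvin--Glazer for $\neg(i)\Rightarrow\neg(ii)$, and an idempotent in the closures of the $FP$-tails for $\neg(ii)\Rightarrow\neg(i)$). The only difference is that where the paper simply cites \cite[Theorem 5.11]{b5}, you unpack its proof via Ellis's theorem and spell out the finite-difference and freeness details, which the paper leaves implicit.
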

\begin{proof} The equivalence $(i)\Leftrightarrow(ii)$ follows from two well-known facts. By ~\cite[Theorem 5.8]{b5}, for every idempotent $q\in G^*$ and every $Q\in q$, there is an injective sequence $(a_n)_{n<\w}$ in $Q$ such that $FP(a_n)_{n<\w}\subseteq Q$. By ~\cite[Theorem 5.11]{b5}, for every injective sequence $(a_n)_{n<\w}$ in $G$, there is an idempotent $q\in G^*$ such that $FP(a_n)_{n<\w}\in q$.

Assume that $qg\in A^*$. Then $q(qg)=qg$ so $qg\in G^*qg$ and, by Proposition~\ref{p8}, $A$ is not disparse. \end{proof}
\begin{Ps}~\label{p12} For every infinite group $G$, we have the following strong inclusions $$Sp_G\subset dSp_G\subset Sm_G,$$ where $Sm_G$ is the ideal of all small subsets of $G$. \end{Ps}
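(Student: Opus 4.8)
The plan is to prove the two inclusions from the earlier characterisations, which is soft, and then to produce a witness for each showing that it is proper, which is the substantive part. The inclusion $Sp_G\subseteq dSp_G$ needs nothing beyond the definitions: a sparse set has all ultracompanions finite, and a finite subset of the Hausdorff space $G^*$ is discrete, so every sparse set is disparse.

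For $dSp_G\subseteq Sm_G$ I would argue by contraposition, showing that a non-small set is non-disparse. If $A$ is not small then, by the identification of small with non-prethick recorded in the proof of Proposition~\ref{p3}, the set $A$ is prethick, so $FA$ is thick for some finite $F\subseteq G$. By Proposition~\ref{p3}$(i)$ there is $p\in G^*$ with $\vt_p(FA)=Gp$, i.e. $Gp\subseteq(FA)^*$. Since $\overline{FA}$ is closed and right multiplication by $p$ is continuous, $\overline{Gp}=(\beta G)p\subseteq\overline{FA}$. Choosing an idempotent $q\in G^*$ (idempotents exist because $G^*$ is a compact right topological semigroup, \cite{b5}) we get $qp\in\overline{FA}$, and as $qp\in G^*$ this gives $qp\in(FA)^*$ with $q(qp)=(qq)p=qp\in G^*(qp)$; hence by Proposition~\ref{p8} the set $FA$ is not disparse. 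Finally $FA=\bigcup_{f\in F}fA$ and $dSp_G$ is a translation-invariant ideal by Proposition~\ref{p10}, so if $A$ were disparse then each $fA$ and their finite union $FA$ would be disparse, a contradiction. Thus $A$ is not disparse, which proves $dSp_G\subseteq Sm_G$.

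It remains to make both inclusions strict. For $dSp_G\subsetneq Sm_G$ it suffices to exhibit a small set that is not disparse, and the mechanism is again an idempotent: if $q\in G^*$ is an idempotent and $A\in q$, then $q=qq\in G^*q$, so by Proposition~\ref{p8} such an $A$ is never disparse; I therefore only need a small set lying in some idempotent. In the countable case I would take an injective sequence $(a_n)_{n<\w}$ whose finite-products set $FP(a_n)_{n<\w}$ is small, and invoke \cite[Theorem 5.11]{b5} to obtain an idempotent $q$ with $FP(a_n)_{n<\w}\in q$. For $Sp_G\subsetneq dSp_G$ I would, for a countable $G$, fix a right cancelable ultrafilter $p\in G^*$ (equivalently, by Proposition~\ref{p9}, one with $p\notin G^*p$) and build $A$ as a union $\bigcup_{n<\w}g_nP_n$ with $P_n\in p$ descending and $(g_n)$ spreading fast, arranged so that $\vt_p(A)\supseteq\{g_np:n<\w\}$ is infinite, whence $A$ is not sparse, while every ultrafilter in $A^*$ is still right cancelable, whence $A$ is disparse by Proposition~\ref{p9}.

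The main obstacle is the strictness, and specifically the verification inside these two constructions: for $dSp_G\subsetneq Sm_G$ one must guarantee a small finite-products set in an arbitrary infinite group, and for $Sp_G\subsetneq dSp_G$ one must guarantee that the \emph{whole} of $A^*$ consists of right cancelable ultrafilters, not merely the chosen $p$. I expect both to be handled by a careful diagonal choice of the parameters $(a_n)$, $(g_n)$ and $(P_n)$ inside a countable subgroup, together with a reduction of the uncountable case to countable subgroups in the spirit of Proposition~\ref{p6}.
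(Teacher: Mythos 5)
Your two inclusions are correct and are essentially the paper's own argument: $Sp_G\subseteq dSp_G$ because finite subsets of a Hausdorff space are discrete, and your contrapositive proof of $dSp_G\subseteq Sm_G$ (prethickness, Proposition~\ref{p3}$(ii)$, an idempotent $q$ with $qp\in(FA)^*$ and $q(qp)=qp$, then Propositions~\ref{p8} and~\ref{p10}) differs from the paper's only in that the paper picks $g\in F$ with $qp\in(gA)^*$ instead of working with $FA$ as a whole; both are fine. The problem is everything after that: the strictness of both inclusions is what the word ``strong'' in the statement asserts, it occupies almost all of the paper's proof, and your proposal defers exactly this part to ``a careful diagonal choice of parameters''. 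Those choices are not routine bookkeeping; they are the content of the proposition.

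Concretely, two things are missing. For $Sm_G\setminus dSp_G$ your mechanism (a small set that is a member of an idempotent) is the right one, but the existence, in an arbitrary countable group, of an injective sequence with $FP(a_n)_{n<\w}$ small is precisely the nontrivial point: FP-sets need not be small at all (in $\ZZ$ the set of finite sums of distinct powers of $2$ is all of $\NN$), and the paper must interleave a second sequence $(b_m)$ subject to condition $(3)$, together with the criterion that $S$ is small iff $G\setminus FS$ is large for every finite $F$, to force smallness. For $dSp_G\setminus Sp_G$ your sketch has an actual flaw, not merely a missing verification: if $A=\bigcup_{n<\w}g_nP_n$ with $P_n\in p$, then $g_nP_n^*\subseteq A^*$, and since right cancelability transfers between $u$ and $g_nu$, disparseness of $A$ forces every ultrafilter in each $P_n^*$ to be right cancelable, i.e.\ forces each $P_n$ to be disparse (Proposition~\ref{p9}). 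Right cancelability of the single ultrafilter $p$ gives no control over the other ultrafilters living on its members: if some $P_n$ contains an FP-set, then $P_n^*$ contains an idempotent and $A$ fails to be disparse no matter how fast $(g_n)$ spreads. So $p$ must be chosen to have a base of disparse sets; the paper secures this by placing the $P_n$ inside a thin set $\{b_j:j<\w\}$ (condition $(1)$, with $P_n=\{b_j:j\ge n\}$), and it still needs the spreading condition $(2)$ to handle the ultrafilters of $A^*$ that avoid every sparse piece $A_n=\{a_ib_j:i\le n,\ i\le j<\w\}$, showing their ultracompanions are singletons. Neither the thinness of the base set nor the argument at these limit ultrafilters appears in your proposal, so the strictness claims remain unproved.
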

\begin{proof} Clearly, $Sp_G\subseteq dSp_G$. To verify $dSp_G\subseteq Sm_G$, we assume that a subset $A$ of $G$ is not small. Then $A$ is prethick and, by Proposition~\ref{p3}$(ii)$, there exist $p\in G^*$ and a finite subset $F$ of $G$ such that $\vt_p(FA)=Gp$. Hence, $G^*p\subseteq (FA)^*$. We takean arbitrary idempotent $q\in G^*$ and choose $g\in F$ such that $qp\in (gA)^*$. Since $q(qp)=qp$ so $q\in G^*qp$ and, by Proposition~\ref{p8}$(ii)$, $gA$ is not disparse. By Proposition~\ref{p10} $A$ is not disparse.

To prove that $dSp_G\setminus Sp_G\neq\varnothing$ and $Sm_G\setminus dSp_G\neq\varnothing$, we may suppose that $G$ is countable. We put $F_0=\{e\}$ and write $G$ as an union of an increasing chain $\{F_n:n<\w\}$ of finite subsets.

$1.$ To find a subset $A\in dSp_G\setminus Sp_G$, we choose inductively two sequences $(a_n)_{n<\w}$, $(b_n)_{n<\w}$ in $G$ such that

$(1)$ $F_nb_n\cap F_{n+1}b_{n+1}=\varnothing,\text{ } n<\w$;

$(2)$ $F_ia_ib_j\cap F_ka_kb_m=\varnothing$, $0\le i\le j<\w$, $0\le k\le m<\w$, $(i,j)\neq(k,m)$.\\
We put $a_0=b_0=e$ and assume that $a_0,...,a_n$, $b_0,...,b_n$ have been chosen. We choose $b_{n+1}$ to satisfy $F_{n+1}b_{n+1}\cap F_ib_i=\varnothing$, $i\le n$ and $$\bigcup_{0\le i\le j<\w}F_ia_ab_i\cap(\bigcup_{o\le i\le n}F_ia_i)b_{n+1}=\varnothing.$$
Then we pick $a_{n+1}$ so that $$F_{n+1}a_{n+1}b_{n+1}\cap(\bigcup_{0\le i\le j<\w F_ia_ib_j})=\varnothing, \text{ } F_{n+1}a_{n+1}b_{n+1}\cap(\bigcup_{0\le i\le n} F_ia_ib_{n+1})=\varnothing.$$
After $\w$ steps, we put $A=\{a_ib_j:0\le i\le j<\w\}$, choose two free ultrafilters $p,q$ such that $\{a_i:i<\w\}\in p$, $\{b_i: i<\w\}\in q$ and note that $A\in pq$. By Proposition~\ref{p5}$(ii)$, $A\notin Sp_G$.

To prove that $A\in dSp_G$, we fix $p\in G^*$ and take an arbitrary $q\in\vt_p(A)$. For $n<\w$, let $A_n=\{a_ib_j:0\le i\le n, i\le j<\w\}$. By $(1)$, the set $\{b_j:j<\w\}$ is thin. Applying Proposition~\ref{p3} and Proposition~\ref{p1}, we see that $A_n$ is sparse. Therefore, if $A_n\in q$ for some $n<\w$ then $q$ is isolated in $\vt_p(A)$. Assume that $A_n\notin q$ for each $n<\w$. We take an arbitrary $g\in G\setminus\{e\}$ and choose $m<\w$ such that $g\in F_m$. By $(2)$, $g(A\setminus A_m)\cap A=\emptyset$ so $gq\notin A^*$. Hence, $\vt_p(A)=\{q\}$.

$2.$ To find a subset $A\in Sm_G\setminus dSp_G$, we choose inductively two sequences $(a_n)_{n<\w}$, $(b_n)_{n<\w}$ in $G$ such that, for each $m<\w$, the following statement hold

$(3)$ $b_mFP(a_n)_{n<\w}\cap F_m(FP(a_n)_{n<\w})=\varnothing.$\\
We put $a_0=e$ and take an arbitrary $g\in G\setminus\{e\}$. Suppose that $a_0,...,a_m$ and $b_0,...,b_m$ have been chosen. We pick $b_{m+1}$ so that
$$b_{m+1}FP(a_n)_{n\le m}\cap F_{m+1}(FP(a_n)_{n\le m})=\varnothing$$
and choose $a_{n+1}$ such that
$$b_{m+1}(FP(a_n)_{n\le m})a_{n+1}\cap F_{m+1}(FP(a_n)_{n\le m})=\varnothing,$$
$$b_{m+1}(FP(a_n)_{n\le m})\cap F_{m+1}(FP(a_n)_{n\le m})a_{n+1}=\varnothing.$$
After $\w$ steps, we put $A=FP(a_n)_{n<\w}$. By Proposition~\ref{p11}, $A\notin dSp_G$. To see that $A\in Sm_G$, we use $(3)$ and the following observation. A subset $S$ of a group $G$ is small if and only if $G\setminus FS$ is large for each finite subset $F$ of $G$.
\end{proof}
\begin{Ps}~\label{p13} Let $G$ be a direct product of some family $\{G_\alpha:\alpha<\kappa\}$ of countable groups. Then $G$ can be partitioned into $\aleph_0$ disparse subsets. \end{Ps}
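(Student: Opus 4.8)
The plan is to exhibit an explicit countable partition $G=\bigsqcup_{n<\w}A_n$ and to verify, using the criterion of Proposition~\ref{p8}, that each $A_n$ is disparse, i.e. that no $p\in A_n^*$ satisfies $p\in G^*p$. I record the elements of $G$ through the coordinates supplied by the decomposition $G=\prod_{\alpha<\kappa}G_\alpha$: fix a well-ordering of $\kappa$ and, for every $\alpha$, an enumeration $G_\alpha=\{a_{\alpha,0}=e,a_{\alpha,1},a_{\alpha,2},\dots\}$. For $g\neq e$ write $\mathrm{supp}(g)=\{\alpha:g_\alpha\neq e\}=\{\alpha_1<\dots<\alpha_s\}$ and read off the indices $b_i$ given by $g_{\alpha_i}=a_{\alpha_i,b_i}$. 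Using a fixed bijection $\w^{<\w}\to\w$, I color $g$ by the code of the finite sequence $(s;b_1,\dots,b_s)$; thus a class $A_n$ consists of all $g$ whose support has a prescribed length and a prescribed sequence of within-factor indices, read in increasing order of coordinates, while the coordinates themselves are left free. The point of recording only the index data, and not the coordinates (of which there are $\kappa$-many), is that the color lives in $\w$ yet is rigid enough that a left translation keeping $g$ inside its class is forced to act trivially on the occupied coordinates.

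To verify that a class $A=A_n$ is disparse I estimate, for $g\neq e$, the set $T_g=\{x\in G:x^{-1}A\in p\}$ and show it cannot belong to a free ultrafilter; then $A\notin qp$ for every $q\in G^*$, so $p\neq qp$ and Proposition~\ref{p8} applies. The base case is $s=1$, where $A$ is a transversal selecting the single element $a_{\alpha,b_1}$ in each factor: a direct computation shows that for $g\neq e$ the intersection $g^{-1}A\cap A$ is finite, bounded by $|\mathrm{supp}(g)|$, so $A$ is in fact thin by Proposition~\ref{p3}$(iv)$, hence disparse. For $s\ge 2$ I argue by induction on the well-ordered $\kappa$, peeling off the largest coordinate. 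Consider $\mu(g)=\max\mathrm{supp}(g)$ and its extension $\mu^\beta\colon\overline{A}\to\beta\kappa$. If $\mu^\beta(p)$ is the principal ultrafilter at some $\gamma$, then $p$ concentrates on $\{g\in A:\mu(g)=\gamma\}=a_{\gamma,b_s}\cdot A'$, where $A'$ is the class with data $(s-1;b_1,\dots,b_{s-1})$ inside the smaller product $\prod_{\alpha<\gamma}G_\alpha$; any $q$ with $qp=p$ must preserve the top coordinate, hence be carried by $\prod_{\alpha\le\gamma}G_\alpha$, and this reduces $qp=p$ to an identity of the same kind in a strictly smaller product, excluded by the inductive hypothesis together with Proposition~\ref{p1}.

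The remaining, decisive case is when $\mu^\beta(p)$ is free, so that the top coordinate of a $p$-typical $g$ exceeds the support of any fixed translate. The obstacle to watch for is a \emph{coordinate-shifting} translation of the form $g=a_{\alpha,b_1}^{-1}a_{\delta,b_1}$, which moves the active coordinate $\alpha$ to a fresh coordinate $\delta$ without disturbing the recorded index sequence, and so threatens to produce infinitely many $h\in A$ with $gh\in A$; this I expect to be the hard part. The plan to neutralize it is twofold. First, since a single $p$ together with a witness $q$ to $qp=p$ involves only countably many coordinates, I pass, in the spirit of Proposition~\ref{p6}, to a countable subproduct and assume $\kappa\le\w$. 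Second, on that subproduct I peel the top coordinate once more, noting that for free $\mu^\beta(p)$ the top index $b_s$ is untouched by $g$ while the lower part behaves like an element of a smaller product, so the induction on $s$ closes as soon as one shows that the bounded lower coordinates contribute only finitely to $g^{-1}A\cap A$. Establishing this finiteness uniformly — ruling out that a fixed $g$ shifts infinitely many disjoint configurations back into $A$ — is the crux, and I would secure it by an inductive choice of coordinates in the spirit of the constructions in Proposition~\ref{p12}, arranging at each stage that the support of any would-be translate is eventually disjoint from the newly added coordinates, which forces $|\mathrm{supp}(gh)|>s$ and hence $gh\notin A$.
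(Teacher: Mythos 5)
Your partition is exactly the paper's: your classes $A_n$ are the sets $D_s=f^{-1}(s)$, $s\in Seq_\NN$, of the paper's proof, and your base case $s=1$ (each such class is almost thin, hence sparse, hence disparse) is correct. The verification for $s\ge 2$, however, is built on a claim that is false. You propose to show that $T=\{x\in G:x^{-1}A\in p\}$ ``cannot belong to a free ultrafilter'', i.e.\ that $T$ is finite for every $p\in A^*$. Since $\vt_p(A)=\{xp:x\in T\}$, this would say that every ultracompanion of $A$ is finite, i.e.\ that $A$ is \emph{sparse} --- and the classes of support size $\ge 2$ need not be sparse. Concretely, already for $\kappa=\w$, let each $G_\alpha$ be a countable group containing an involution $u_\alpha$, choose the enumerations so that $f_\alpha(u_\alpha)=1$, and let $A$ be the class with data $(2;1,1)$, so $A=\{u_\alpha u_\beta:\alpha<\beta<\w\}$. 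Take any free ultrafilter $p$ containing $Y=\{u_0u_\gamma:\gamma\ge1\}\subseteq A$. For every $\delta\ge1$ and $x=u_0u_\delta$ we have $x\cdot(u_0u_\gamma)=u_\delta u_\gamma\in A$ whenever $\gamma\notin\{0,\delta\}$ (distinct factors commute and $u_0^2=e$), so $x^{-1}A\in p$; hence $T\supseteq\{u_0u_\delta:\delta\ge1\}$ is infinite, and likewise $X=Y$ violates Proposition~\ref{p5}$(iii)$, so $A$ is not sparse. This is exactly the coordinate-shifting phenomenon you flag as ``the hard part'': it is not an obstacle to be engineered away but a genuine feature --- these classes have infinite (though discrete) ultracompanions. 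Note also that if your finiteness claim were provable, you would have partitioned $G$ into $\aleph_0$ sparse subsets for arbitrary $\kappa$, contradicting the remark after this proposition that, by \cite[Theorem 1]{b11}, no such partition exists when $|G|$ is sufficiently large.

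The idea you are missing is that $p$ must be isolated in $\vt_p(D_s)$ by a member of $p$ \emph{smaller} than $D_s$, and the coset structure of the product supplies it. Since every element of $D_s$ has its $i$-th coordinate in a finite set determined by $s$, there is $x_i\in G_i$ with $x_iH_i\in p$, where $H_i=\otimes\{G_j:j<\kappa,\ j\neq i\}$; and since elements of $D_s$ have support of size exactly $n$, the set $\{i:x_iH_i\in p,\ x_i\neq e_i\}=\{i_1,\dots,i_k\}$ is finite. Put $P=x_{i_1}H_{i_1}\cap\dots\cap x_{i_k}H_{i_k}\cap D_s\in p$. Then $gp\notin P^*$ for every $g\neq e$: if $supt\,g$ meets $\{i_1,\dots,i_k\}$ the coset data clash, and otherwise $H=\bigcap_{j\in supt\,g}H_j\in p$ and $|supt(gx)|>n$ for all $x\in H\cap P$, so $g(H\cap P)\cap D_s=\varnothing$. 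In particular $P\notin qp$ for every $q\in G^*$, so $qp\neq p$; your Proposition~\ref{p8} scheme then works verbatim with $P$ in place of $A$. Note that this argument needs no reduction of $\kappa$: your proposed passage to a countable subproduct ``in the spirit of Proposition~\ref{p6}'' is unsupported, since Proposition~\ref{p6} concerns sparse sets and no countable-determination result for disparse sets is available (an ultrafilter on an uncountable $G$ need not concentrate on any countable subgroup). Likewise, the concluding appeal to ``an inductive choice of coordinates in the spirit of Proposition~\ref{p12}'' has no content here: in Proposition~\ref{p12} the sets were being constructed, whereas your partition and the coordinates are fixed in advance, so there is nothing left to choose.
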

\begin{proof} For each $\alpha<\kappa$, we fix some bijection $f_\alpha: G_\alpha\setminus\{e_\alpha\}\to\NN$, where $e_\alpha$ is the identity of $G_\alpha$. Each element $g\in G\setminus\{e\}$ has the unique representation
$$g=g_{\alpha_1}g_{\alpha_2}...g_{\alpha_n},\text{ }\alpha_1<\alpha_2<...<\alpha_n<\kappa,\text{ }g_{\alpha_i}\in G_{\alpha_i}\setminus\{e_{\alpha_i}\}.$$
We put $supt g=\{\alpha_1,...,\alpha_n\}$ and let $Seq_\NN$ denotes the set of all finite sequence in $\NN$. We define a mapping $f: G\setminus\{e\}\to Seq_\NN$ by $$f(g)=(n,f_{\alpha_1}(g_{\alpha_1}),...,f_{\alpha_n}(g_{\alpha_n}))$$
and put $D_s=f^{-1}(s)$, $s\in Seq_\NN$. 

We fix some $s\in Seq_{\NN}$ and take an arbitrary $p\in G^*$ such that $p\in D_s^*$. Let $s=\{n,m_1,...,m_n\}$, $g\in D_s$ and $i\in supt g$. It follows that, for each $i<\kappa$, there exists $x_i\in G_i$ such that $x_iH_i\in p$, where $H_i=\otimes\{G_i:j<\kappa,j\neq i\}$. We choose $i_1,...,i_k$, $k<n$ such that $$\{i_1,...,i_k\}=\{i<\kappa:x_iH_i\in p,\text{ }x_i\neq e_i\},$$
put $P=x_{i_1}H_{i_1}\cap...\cap x_{i_k}H_{i_k}\cap D_s$ and assume that $gp\in P^*$ for some $g\in G\setminus\{e\}$. Then $supt g\cap\{i_1,...,i_k\}=\varnothing$. Let $supt g=\{j_1,...,j_t\}$, $H=H_{j_1}\cap...\cap H_{j_t}$. Then $H\in p$ but $g(H\cap P)\cap D_s=\varnothing$ because $|supt gx|>n$ for each $x\in H\cap P$. In particular, $gp\notin P^*$. Hence, $p$ is isolated in $\vt_p(D_s)$.\end{proof}

By Proposition~\ref{p13}, every infinite group embeddable in a direct product of countable groups (in particular, every Abelian group) can be partitioned into $\aleph_0$ disparse subsets.
\begin{Qs} Can every infinite group be partitioned into $\aleph_0$ disparse subsets? \end{Qs}

By \cite{b9}, every infinite group can be partitioned into $\aleph_0$ small subsets. For an infinite group $G$, $\eta(G)$ denotes the minimal cardinality $\kappa$ such that $G$ can be partitioned into $\eta(G)$ sparse subsets. By \cite[Theorem 1]{b11}, if $|G|>(\kappa^+)^{\aleph_0}$ then $\eta(G)>\kappa$, so Proposition~\ref{p12} does not hold for partition of $G$ into sparse subsets. For partitions of groups into thin subsets see \cite{b10}.

\section{Comments}~\label{s3}

$1.$ A subset $A$ of an amenable group $G$ is called {\em absolute null} if $\mu(A)=0$ for each Banach measure $\mu$ on $G$, i.e. finitely additive left invariant function $\mu:\PP_G\to[0,1]$. By \cite[Theorem 5.1]{b6} Proposition~\ref{p5}, every sparse subset of an amenable group $G$ is absolute null.

\begin{Qs} Is every disparse subset of an amenable group $G$ absolute null? \end{Qs}
To answer this question in affirmative, in view of Proposition~\ref{p8}, it would be enough to show that each ultrafilter $p\in G^*$ such that $p\notin G^*p$ has an absolute null member $P\in p$. But that is not true. We sketch corresponding counterexample.

We put $G=\ZZ$ and choose inductively an injective sequence $(a_n)_{n<\w}$ in $\NN$ such that, for each $m<\w$ and $i\in\{-(m+1),...,-1,1,...,m+1\}$, the following statements hold

$(\ast)\text{ } (\bigcup_{n>m}(a_n+2^{a_n}\ZZ))\cap(i+\bigcap_{n>m}(a_n+2^{a_n}\ZZ))=\varnothing$

Then we fix an arbitrary Banach measure $\mu$ on $\ZZ$ and choose an ultrafilter $q\in\ZZ^*$ such that $2^n\ZZ\in q$, $n\in\NN$ and $\mu(Q)>0$ for each $Q\in q$. Let $p\in G^*$ be a limit point of the set $\{a_n+q:n<\w\}$. Clearly, $\mu(P)>0$ for each $P\in p$. On the other hand, by $(\ast)$, the set $\ZZ+p$ is discrete so $p\notin\ZZ^*+p$.

In \cite{b17} S. Solecki, for a group $G$, defined two functions $\sigma^R,\sigma^L:\PP_G\to [0,1]$ by the formulas
$$\sigma^R(A)=\inf_F\sup_{g\in G}\frac{F\cap Ag}{|F|},\text{ }\sigma^L(A)=\inf_F\sup_{g\in G}\frac{|F\cap gA|}{|F|},$$
where $\inf$ is taken over all finite subsets of$G$.

By \cite{b1} and \cite{b20}, a subset $A$ of an amenable group is absolute null if and only if $\sigma^R(A)=0$.
\begin{Qs} Is $\sigma^R(A)=0$ for every sparse subset $A$ of a group $G$? \end{Qs}
To answer this question positively it suffices to prove that if $\sigma^R(A)>0$ then there is $g\in G\setminus\{e\}$ such that $\sigma^R(A\cap gA)>0.$

$2.$ The origin of the following definition is in asymptology (see \cite{b16}, \cite{b19}). A subset $A$ of a group $G$ is called {\em asymptotically scattered} if, for any infinite subset $X$ of $A$, there is a finite subset $H$ of $G$ such that, for any finite subset $F$ of $G$ satisfying $F\cap H=\varnothing$, we can find a point $x\in X$ such that $Fx\cap A=\varnothing$. By \cite[Theorem 13]{b13} and Propositions~\ref{p5} and~\ref{p6}, a subset $A$ is sparse if and only if $A$ is asymptotically scattered.

We say that a subset $A$ of $G$ is {\em ultrascattered} if, for any $p\in G^*$, the space $\vt_p(A)$ is scattered, i.e. each subset of $\vt_p(A)$ has an isolated point. Clearly, each disparse subset is ultrascattered.
\begin{Qs} How one can detect whether given a subset $A$ of $G$ is ultrascattered? Is every ultrascattered subset small? \end{Qs}
We say that a subset $A$ of $G$ is {\em weakly asymptotically scattered} if, for any subset $X$ of $A$, there is a finite subset $H$ of $G$ such that, for any finite subset $F$ of $G$ satisfying $F\cap H=\varnothing$, we can find a point $x\in X$such that $Fx\cap X=\varnothing$.
\begin{Qs} Are there any relationships between ultrascattered and weakly asymptotically scattered subsets? \end{Qs}

$3.$ Let $A$ be a subset of a group $G$ such that each ultracompanion $\vt_p(A)$ is compact. We show that $A$ is sparse. In view of Proposition~\ref{p6}, we may suppose that $G$ is countable. Assume the contrary: $\vt_p(A)$ is infinite for some $p\in G^*$. On one hand, the countable compact space $\vt_p(A)$ has an injective convergent sequence. On the other hand, $G^*$ has no such a sequence.

$4.$ Let $X$ be a subset of a group $G$, $p\in G^*$. We say that the set $Xp$ is uniformly discrete if there is $P\in p$ such that $xP^*\cap yP^*=\varnothing$ for all distinct $x,y\in X$.
\begin{Qs} Let $A$ be a disparse subset of a group $G$. Is $\vt_p(A)$ uniformly discrete for each $p\in G^*$? \end{Qs}

$5.$ Let $\FF$ be a family of subsets of a group $G$, $A$ be a subset of $G$. We denote $\delta_{\FF}(A)=\{g\in G:gA\cap A\in \FF\}$. If $\FF$ is the family of all infinite subsets of $G$, $\delta_{\FF}(A)$ were introduced in \cite{b14} under name combinatorial derivation of $A$. Now suppose that $\delta_p(\FF)\neq\varnothing$, pick $q\in A^*\cap Gp$ and note that $\delta_p(A)=\delta_q(A)$. Then $\delta_q(A)=(\delta_q(A))^{-1}q$.

\end{document}